\newtheorem{thm}{Theorem}[section]
\newtheorem{prop}[thm]{Proposition}
\newtheorem{lemma}[thm]{Lemma}
\newtheorem{cor}[thm]{Corollary}
\newtheorem{question}[thm]{Question}
\newtheorem{defin}[thm]{Definition}
\newtheorem{claim}[thm]{Claim}
\theoremstyle{remark}
\newtheorem{rmk}[thm]{Remark}
\numberwithin{equation}{section}
\newcommand{\F}{\mathbb F}
\newcommand{\Z}{\mathbb Z}
\newcommand{\Br}{\operatorname{Br}}
\renewcommand{\phi}{\varphi}
\newcommand{\Mat}{\operatorname{M}}
\newcommand{\on}[1]{\operatorname{#1}}
\author{Zinovy Reichstein}
\address[Reichstein]{Department of Mathematics\\
	University of British Columbia\\
	Vancouver, BC V6T 1Z2\\Canada}
\email{reichst@math.ubc.ca}
\thanks{Zinovy Reichstein was partially supported by Natural Sciences and Engineering Research Council of Canada Discovery grant  RGPIN-2023-03353.}
\author{Federico Scavia}
\address[Scavia]{CNRS\\
	Institut Galil\'ee\\
	Universit\'e Sorbonne Paris Nord\\
	99 avenue Jean-Baptiste Cl\'ement, 93430\\ 
	Villetaneuse, France}
\email{scavia@math.univ-paris13.fr}
\subjclass[2020]{16K50, 12G05, 14H52, 14K05}
\keywords{Brauer group, Severi-Brauer variety, central simple algebra, division algebra, genus one curve, elliptic curve, abelian variety, torsor, splitting field}
\title{Brauer classes not split by genus one curves}
\begin{document}

	\begin{abstract}
		We show that there exist Brauer classes over a field $F$ which are not split by any genus one curve over $F$. This answers a question of Clark and Saltman.
	\end{abstract}
	
	\maketitle
	
	\section{Introduction}
	
	Let $F$ be a field. By a \emph{genus one curve} over $F$, we shall mean a smooth projective geometrically connected curve of geometric genus one over $F$. Given a Brauer class $\alpha\in \on{Br}(F)$ and a smooth integral $F$-variety $V$, we shall say that $\alpha$ is split by $V$ if $\alpha$ pulls back to zero in $\on{Br}(V)$, or equivalently, by a theorem of Grothendieck, if $\alpha$ pulls back to zero in $\on{Br}(F(V))$. The following question, attributed to Clark \cite{clark2008open} and Saltman \cite{ruozzi2011rage}, was inspired by the earlier work of Artin on Severi--Brauer varieties \cite{artin-bs-varieties}.
	
	\begin{question}[Clark, Saltman]
		\label{question-clark-saltman}
		Let $F$ be a field, and let $\alpha\in \Br(F)$ be a Brauer class. Does there exist a genus one curve $C$ over $F$ such that $\alpha$ is split by $C$?
	\end{question}
	
	An equivalent reformulation of \Cref{question-clark-saltman}
	is as follows: Does every Severi--Brauer variety $Y$ over $F$ admit an $F$-morphism $C \to Y$, for some genus one curve $C$ over $F$?
	
	\medskip
	There is now a significant body of literature devoted to \Cref{question-clark-saltman} and its variants. In particular, \Cref{question-clark-saltman} has a positive answer in the following cases:
	\begin{itemize}
		\item enen $\alpha$ has index $\leqslant 3$, by work of Swets \cite{swets1995global};
		\item when $\alpha$ has index $\leqslant 5$, due to de Jong and Ho \cite{dejong2012genus};
		\item when $\alpha$ has index $6$, due to Auel (unpublished);
		\item when $\alpha$ has index $7$ and $F$ is a global field, due to Antieau and Auel \cite{antieau-auel2021};
		\item when $\alpha$ is cyclic of any exponent $n$ and there exists an elliptic curve $E$ over $F$ such that $E[n]\cong \Z/n\Z\times\mu_n$, again due to Antieau and
		Auel \cite[Theorem C]{antieau-auel2021}.
	\end{itemize}
	
	Recall that the Jacobian of a genus one curve is an elliptic curve, and every genus one curve is a torsor under its Jacobian.
	One may thus ask a related (but weaker) question: Can every Brauer class $\alpha \in \Br(F)$ be split by a torsor under some abelian variety over $F$, not necessarily one-dimensional? This question has been answered in the affirmative by Ho and Lieblich~\cite{ho-lieblich2021}. 
	
	For other recent work related to \Cref{question-clark-saltman}, see Saltman \cite{saltman2021genus}, Mackall--Rekuski \cite{mackall2024elliptic}, Mackall \cite{mackall2023chow, mackall2026}, and Huybrechts--Mattei~\cite[Theorem 1.2]{huybrechts-mattei2025}.
	
	In this paper we show that \Cref{question-clark-saltman} has a negative answer. Let $p$ be a prime, and suppose that $F$ contains a primitive $p$-th root of unity $\zeta$. Then, for all $u,v\in F^\times$, we let $(u,v)_p\in \on{Br}(F)[p]$ be the corresponding symbol, that is, the Brauer class of the degree-$p$ cyclic central simple algebra over $F$ generated by two variables $x,y$ modulo the relations $x^p=u$, $y^p=v$ and $xy=\zeta yx$. 
	
	\begin{thm}\label{mainthm}
		Let $p$ be a prime, let $k$ be a field of characteristic different from $p$, and suppose that $k$ contains a primitive $p$th root of unity $\zeta$. 
       Let
		$F_r \coloneqq k(\!(t_1)\!)(\!(t_2)\!) \ldots (\!(t_{2r})\!)$ be the iterated Laurent series field 
        in the variables $t_1, \ldots, t_{2r}$ over $k$, 
        and consider the Brauer class $\alpha_r=(t_1,t_2)_p+ \cdots+ (t_{2r-1},t_{2r})_p \in \Br(F_r)$. If
		\[
		r \geqslant 
		\begin{cases}
			5g^2 +2g & (p>2,\, g\geqslant 2), \\
			9g^2 + 2g-1 & (p = 2,\, g\geqslant 2), \\
			6 & (p>2,\, g=1), \\
			7 & (p=2,\, g=1),
		\end{cases}
		\]
		then $\alpha_r$ cannot be split by any torsor under any $g$-dimensional abelian variety over $F_r$.
	\end{thm}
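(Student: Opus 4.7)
The plan is to proceed by induction on $r$, exploiting the iterated discrete valuation structure of $F_r$ together with Néron models and residue computations in Galois cohomology. Writing $F_r = F_{r-1}(\!(t_{2r-1})\!)(\!(t_{2r})\!)$, let $w_1$ denote the $t_{2r}$-adic valuation of $F_r$, with residue field $F_r^{(1)} \coloneqq F_{r-1}(\!(t_{2r-1})\!)$, and let $w_2$ be the $t_{2r-1}$-adic valuation of $F_r^{(1)}$, with residue field $F_{r-1}$. By Kato's theorem, the iterated residue $\partial_{w_2} \circ \partial_{w_1}$ applied to $\alpha_r = \alpha_{r-1} + (t_{2r-1}, t_{2r})_p$ sends the last symbol to the generator $1 \in H^0(F_{r-1}, \mathbb F_p)$, while $\alpha_{r-1}$ is unramified at both valuations and restricts to itself.

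Next I would suppose for contradiction that $T$ is a torsor under a $g$-dimensional abelian variety $A$ over $F_r$ with $\alpha_r \vert_{F_r(T)} = 0$. After a prime-to-$p$ base change (harmless for the order-$p$ class $\alpha_r$), one can invoke the semistable reduction theorem to arrange that $A$ has semistable reduction at $w_1$. Let $\mc A$ be the identity component of the Néron model of $A$ over $\mc O_{w_1}$, whose special fibre is an extension of an abelian variety $A^{(1)}$ of dimension at most $g$ over $F_r^{(1)}$ by a split torus. The torsor $T$ spreads to an $\mc A$-torsor $\mc T$, and the vanishing of $\alpha_r$ on the generic fibre, combined with the exact sequence relating $\Br(F_r(\mc T))$ to the Brauer groups of the generic and special fibres, forces the vanishing of $\partial_{w_1}(\alpha_r)$ on the function field of $\mc T_{\overline{w_1}}$. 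Applying the same argument at $w_2$ descends the problem to a torsor under an abelian variety of dimension $\leqslant g$ over $F_{r-1}$ that splits $\alpha_{r-1}$, reducing the problem at level $r$ to the problem at level $r-1$ with the same or smaller $g$. Iterating eventually violates the hypothesis $r \geqslant 5g^2 + 2g$ (resp.\ $9g^2 + 2g - 1$), producing the desired contradiction.

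The main obstacle is the dimension bookkeeping under bad reduction. When $A$ fails to have good reduction at some $w_i$, the abelian quotient $A^{(1)}$ of the Néron special fibre can have dimension strictly smaller than $g$, but the toric (and possibly unipotent) parts contribute additional cohomology classes in degrees $H^1$ and $H^0$ that must be absorbed into the residues of earlier symbol pairs. Quantifying this absorption yields the quadratic estimate: each of the $O(g)$ possible dimension drops can force $O(g)$ further symbol pairs to be expended before a splitting torsor of the residual class can exist, producing the $O(g^2)$ bound. The larger coefficient in the $p=2$ case reflects the nontrivial Weil pairing $A[2] \times A[2] \to \mu_2$ and the associated self-dual polarization, which roughly doubles the number of independent classes that must be killed at each inductive step.
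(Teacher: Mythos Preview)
Your proposal follows a completely different route from the paper, and as written it is not a proof but a heuristic sketch with several unfilled gaps. The paper's argument is global and group-theoretic: any field $L$ splitting $D_r$ and finite Galois over a prime-to-$p$ extension $F'$ of $F_r$ must satisfy $\on{rank}_p(\on{Gal}(L/F'))\geqslant r$ (\Cref{prop.amitsur}), while any torsor of $p$-power period under a $g$-dimensional abelian variety acquires a point over an extension whose Galois group sits inside an extension of $\mathrm{SL}_{2g}(\Z/p^e\Z)$ by $(\Z/p^e\Z)^{2g}$ (\Cref{split-torsor-abelian-variety}). The precise constants $5g^2+2g$, $9g^2+2g-1$, $6$, $7$ come out of exact $p$-rank computations for $\mathrm{SL}_{2g}(\Z/p^e\Z)$ (\Cref{cor-p-rank}); there is no induction on $r$ anywhere.

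Your inductive scheme, by contrast, never actually closes. First, semistable reduction is not in general available after a prime-to-$p$ extension, so the ``harmless'' base change you invoke may already kill $\alpha_r$. Second, even granting semistable reduction, a torsor under $A$ need not extend to a torsor under the connected N\'eron model, and you give no argument linking the vanishing of $\alpha_r$ on the generic fibre to the vanishing of the residue (or the specialised class $\alpha_{r-1}$) on the special fibre; the sentence ``the exact sequence relating $\Br(F_r(\mc T))$ to the Brauer groups of the generic and special fibres'' is not a proof. Third, and most seriously, the entire quantitative content---the derivation of $5g^2+2g$ etc.---is replaced by the phrase ``each of the $O(g)$ possible dimension drops can force $O(g)$ further symbol pairs to be expended,'' which is a plausibility remark, not an argument, and does not yield the stated constants. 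Finally, your explanation of the discrepancy at $p=2$ is incorrect: the Weil pairing $A[n]\times A[n]\to\mu_n$ exists for every $n$, and the actual source of the larger constant is the purely group-theoretic fact that $\on{rank}_2(\mathrm{SL}_{2g}(\Z/2^e\Z))$ is bounded by $9g^2-2$ rather than $5g^2-1$ (\Cref{cor-p-rank}), reflecting the failure of \Cref{claim-order-p}(1) at the prime $2$.
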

	
	The case $g=1$ of \Cref{mainthm} gives a negative answer to \Cref{question-clark-saltman}. Since $\alpha_r$ descends to the purely transcendental extension $k(t_1,\dots,t_{2r})$ of $k$, \Cref{mainthm} also gives examples of Brauer classes over $k(t_1,\dots,t_{2r})$ which are not split by genus one curves.
	
    We will now briefly describe the idea of our proof of \Cref{mainthm}. We may assume without loss of generality that $k$ is algebraically closed. For the sake of simplicity we will limit ourselves to the case where $g=1$ and $p$ is odd.  

	Suppose that $\alpha_r$ is split by a genus one curve $C$ over $F_r$. 
    Our starting point is the following elementary observation: If $C$ acquires an $L$-point for some field extension $L/F_r$, then $L$ splits $\alpha_r$. In particular, we will consider finite extensions $L/F_r$ of the form
\begin{equation}\label{e.tower}
\vcenter{
  \xymatrix{
    L \ar@{-}[d]^{\text{\scriptsize Galois}} \\
    F' \ar@{-}[d]^{\text{\scriptsize prime-to-$p$}} \\
    F,
  }
}
\end{equation}
where $F = F_r$ and the degree $[F':F]$ is not divisible by $p$. On the one hand, we show that for any field $F$ of characteristic different from $p$ and any 
genus one curve $C/F$, there exists a tower of the form~\eqref{e.tower} such that $C(L) \neq \emptyset$ and $\on{Gal}(L/F')$ does not contain a subgroup isomorphic to $(\Z/p\Z)^6$; see \Cref{prime-to-p-torsor}, \Cref{split-torsor-abelian-variety} and \Cref{prop-p-rank}. On the other hand, if a field extension $L/F_r$ of this form splits $\alpha_r$, then the Galois group $\on{Gal}(L/F')$ contains a subgroup isomorphic to $(\Z/p\Z)^r$; see \Cref{lem.amitsur}.  This yields a contradiction as soon as $r\geqslant 6$.
	
	To reconcile the counterexamples of \Cref{mainthm} with the known cases listed above, where a positive answer to~\Cref{question-clark-saltman} has been established, we remark that in all of those cases, $\alpha$ is assumed to be the Brauer class of an algebra that is cyclic or bicyclic (or becomes cyclic or bicyclic after a suitable prime-to-$p$ extension). This is precisely the situation where our argument fails most dramatically.

	\section{Splitting torsors under finite \'etale group schemes}\label{section-3}
	The purpose of this section is to prove \Cref{split-completely-torsor}, which describes the Galois groups of some splitting fields for torsors under finite \'etale group schemes. 
	
	We begin with some preliminaries on Galois cohomology. Throughout this section $F$ will denote a field, $F_s$ will denote a separable closure of $F$, and $\Gamma_F\coloneqq \on{Gal}(F_s/F)$ will denote the absolute Galois group of $F$.
    
    For every finite (abstract) group $G_0$, we let $H^1(F,G_0)$ be the set of isomorphism classes of $G_0$-torsors over $F$. We have a natural bijection
	\begin{equation}\label{h1}
		\begin{tikzcd}
			H^1(F,G_0) \arrow[r,"\sim"]& \on{Hom}_{\on{cts}}(\Gamma_F,G_0)\big/{\sim},
		\end{tikzcd}
	\end{equation}
	where on the right $G_0$ is endowed with the discrete topology and, for any two continuous homomorphisms $f_1,f_2\colon \Gamma_F\to G_0$, we say that $f_1\sim f_2$ if and only if there exists $g\in G_0$ such that $f_2(\sigma)= gf_1(\sigma)g^{-1}$ for every $\sigma\in \Gamma_F$. The bijection (\ref{h1}) sends a $G_0$-torsor $P$ to the conjugacy class of any cocycle obtained by fixing a trivialization of $P$ over $F_s$ (such a cocycle is continuous because $P$ becomes trivial over a finite extension of $F$, and it is a homomorphism because $\Gamma_F$ acts trivially on $G_0$). The trivial $G_0$-torsor corresponds to the conjugacy class of the trivial homomorphism $\Gamma_F \to G_0$; this class is a singleton, as conjugation by any $g \in G_0$ fixes the trivial homomorphism.
	
	The bijection (\ref{h1}) is functorial in $F$, in the following sense: for every finite extension $K/F$, we have a commutative square
	\[
	\begin{tikzcd}
		H^1(F,G_0) \arrow[r,"\sim"] \arrow[d] & \on{Hom}_{\on{cts}}(\Gamma_F,G_0)\big/{\sim} \arrow[d] \\
		H^1(K,G_0) \arrow[r,"\sim"] & \on{Hom}_{\on{cts}}(\Gamma_K,G_0)\big/{\sim},
	\end{tikzcd}
	\]
	where the left vertical map is given by base change from $F$ to $K$ and the right vertical map by restriction from $\Gamma_F$ to the open subgroup $\Gamma_{K}$.
	
	Let $\on{Aut}(G_0)$ be the automorphism group of $G_0$. By Galois descent, $H^1(F,\on{Aut}(G_0))$ classifies forms of $G_0$, that is, $F$-groups $G$ such that $G_{F_s}$ is isomorphic to the constant $F_s$-group associated to $G_0$. More precisely, we have bijections
	\begin{equation}\label{forms}
	\begin{tikzcd}[column sep=2em]
		\{\text{Forms of $G_0/F$}\}\big/\mathrm{Isom.} \arrow[r,"\sim"] & H^1(F,\on{Aut}(G_0)) \arrow[r,"\sim"] & \on{Hom}_{\on{cts}}(\Gamma_F,\on{Aut}(G_0))\big/{\sim},
	\end{tikzcd}
\end{equation}
	where the left map sends a form $G$ of $G_0$ to the $\on{Aut}(G_0)$-torsor of isomorphisms between $G_0$ and $G$, and where the right map is (\ref{h1}) for $\on{Aut}(G_0)$.
	The constant $F$-group associated to $G_0$ corresponds under (\ref{forms}) to the trivial $\on{Aut}(G_0)$-torsor and to the trivial homomorphism $\Gamma_F\to \on{Aut}(G_0)$.
	
	For a finite \'etale $F$-scheme $Y$, the \emph{decomposition field} of $Y$ is the smallest (with respect to inclusion) field extension $F\subset L\subset F_s$ such that $Y_L$ is a disjoint union of copies of $\on{Spec}(L)$. Given any separable polynomial $f(x)\in F[x]$ such that  $Y\cong\on{Spec}(F[x]/(f(x)))$, the decomposition field of $Y$ is equal to the splitting field of $f(x)$, that is, the extension of $F$ generated by the roots of $f(x)$ in $F_s$. In particular, the decomposition field of $Y$ is a finite Galois extension of $F$.
	
	\begin{lemma}\label{galois-of-decomposition}
		Let $F$ be a field, let $G_0$ be a finite (abstract) group, let $P$ be a $G_0$-torsor over $F$, let $\varphi\colon \Gamma_F\to G_0$ be a continuous homomorphism corresponding to $P$ via (\ref{h1}), and let $F\subset L\subset F_s$ be the decomposition field of $P$. Then $L$ is the fixed field of $\on{Ker}(\varphi)$, and hence $\on{Gal}(L/F)\cong\on{Im}(\varphi)$. In particular, $\on{Gal}(L/F)$ is isomorphic to a subgroup of $G_0$.
	\end{lemma}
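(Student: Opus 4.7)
The plan is to unravel the bijection (\ref{h1}) at the level of $F_s$-points of $P$ and identify the decomposition field of $P$ directly as the fixed field of $\ker(\varphi)$.

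First I would reduce the statement to a question about rational points. Since $G_0$ acts simply transitively on $P(F_s)$, any single $L$-point of $P$ trivializes $P_L$ as a $G_0$-torsor and hence exhibits $P_L$ as a disjoint union of $|G_0|$ copies of $\on{Spec}(L)$. Therefore the decomposition field of $P$ coincides with the smallest subextension $F\subset L\subset F_s$ for which $P(L)\neq\emptyset$.

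Next I would unwind the cocycle. Fix a trivialization $p_0\in P(F_s)$; by simple transitivity, one may take $\varphi$ to be the unique function $\Gamma_F\to G_0$ satisfying $\sigma(p_0)=\varphi(\sigma)\cdot p_0$. Because $G_0$ is constant over $F$, every $g\in G_0$ is $F$-rational and its action on $P$ commutes with the Galois action, so for any point $p=g\cdot p_0\in P(F_s)$ one computes
\[
\sigma(p) \;=\; g\cdot\sigma(p_0) \;=\; \bigl(g\varphi(\sigma)\bigr)\cdot p_0.
\]
By simple transitivity again, $\sigma(p)=p$ if and only if $\varphi(\sigma)=1$; in particular the stabilizer in $\Gamma_F$ of every $F_s$-point of $P$ equals $\ker(\varphi)$, independently of the chosen point.

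Combining the two previous observations, $P(L)\neq\emptyset$ if and only if $\Gamma_L\subseteq\ker(\varphi)$, so the smallest such $L$ is precisely the fixed field of $\ker(\varphi)$. Since $\ker(\varphi)$ is normal in $\Gamma_F$, this $L$ is Galois over $F$ with $\on{Gal}(L/F)\cong\Gamma_F/\ker(\varphi)\cong\on{Im}(\varphi)\leqslant G_0$, as required. I do not foresee a serious obstacle; the only care needed is to match the convention in (\ref{h1}) with the defining formula $\sigma(p_0)=\varphi(\sigma)\cdot p_0$, but since (\ref{h1}) only records $\varphi$ up to conjugacy and both $\ker(\varphi)$ and $\on{Im}(\varphi)$ are conjugation-invariant, the choice of convention does not affect the conclusion.
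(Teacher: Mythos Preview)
Your proof is correct and follows essentially the same logic as the paper's: both establish that for any intermediate field $K$, the scheme $P_K$ is split (equivalently, has a $K$-point) if and only if $\Gamma_K\subseteq\ker(\varphi)$, and then take $L$ to be the smallest such field. The only difference is cosmetic: the paper packages this equivalence as (i)$\Leftrightarrow$(ii)$\Leftrightarrow$(iii) and appeals to the functoriality of (\ref{h1}) for the last step, whereas you unpack that functoriality by computing the Galois stabilizer of each $F_s$-point directly.
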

	
	\begin{proof}
		For every field extension $F\subset K\subset F_s$, the following are equivalent:
		\begin{enumerate}
			\item[(i)] $P_K$ is a disjoint union of $|G_0|$ copies of $\on{Spec}(K)$,
			\item[(ii)] $P_K$ is a trivial $G_0$-torsor,
			\item[(iii)] the restriction of $\varphi$ to $\Gamma_K\subset \Gamma_F$ is trivial, that is, $\Gamma_K\subset\on{Ker}(\varphi)$.
		\end{enumerate}
		Indeed, the equivalence between (i) and (ii) follows from the fact that $G_0$ is discrete, and the equivalence between (ii) and (iii) follows from (\ref{h1}) applied over $K$. By definition, $L$ is the smallest field extension of $F$ such that (i) holds. It follows that $L$ is also the smallest extension of $F$ for which (iii) holds, that is, $\Gamma_L=\on{Ker}(\varphi)$ as subgroups of $\Gamma_F$, as desired. 
	\end{proof}
	
	\begin{prop}\label{split-completely-torsor}
		Let $F$ be a field, let $G$ be a finite \'etale $F$-group, let $G_0\coloneqq G(F_s)$, and let $\varphi\colon \Gamma_F\to \on{Aut}(G_0)$ be a continuous homomorphism corresponding to $G$ via (\ref{forms}). Let $P$ be a $G$-torsor over $F$, and let $F\subset L\subset F_s$ be the decomposition field of $P$. Then $P(L)\neq\emptyset$ and the Galois group $\on{Gal}(L/F)$ is an extension of $\on{Im}(\varphi)\leqslant \on{Aut}(G_0)$ by a subgroup of $G_0$.
	\end{prop}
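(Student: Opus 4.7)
My plan is to combine the cocycle for the form $G$ (namely $\varphi$) with a cocycle for $P$ into a single continuous homomorphism $\psi \colon \Gamma_F \to G_0 \rtimes \on{Aut}(G_0)$, and then to identify $\Gamma_L$ with $\on{Ker}(\psi)$ in the spirit of \Cref{galois-of-decomposition}.

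First, I would fix a base point $p_0 \in P(F_s)$, which exists because $P_{F_s}$ is a trivial torsor under the constant group $G_{F_s}$. Identifying $P(F_s)$ with $G_0$ via the (right) action $g \mapsto p_0 \cdot g$, I define $c(\sigma) \in G_0$ by $\sigma(p_0) = p_0 \cdot c(\sigma)$. Since the $G_0$-action and the $\Gamma_F$-action on $P(F_s)$ satisfy $\sigma(p \cdot g) = \sigma(p) \cdot \varphi(\sigma)(g)$, one computes
\[ \sigma(p_0 \cdot g) \;=\; p_0 \cdot c(\sigma) \cdot \varphi(\sigma)(g), \]
so under the identification $P(F_s) = G_0$ each $\sigma \in \Gamma_F$ acts as $g \mapsto c(\sigma)\,\varphi(\sigma)(g)$. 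Computing $\sigma\tau(p_0)$ in two ways yields the cocycle identity $c(\sigma\tau) = c(\sigma)\cdot\varphi(\sigma)(c(\tau))$, which is precisely the condition for
\[ \psi \colon \Gamma_F \longrightarrow G_0 \rtimes \on{Aut}(G_0), \qquad \sigma \longmapsto (c(\sigma),\, \varphi(\sigma)) \]
to be a continuous group homomorphism.

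Next, I would identify $\Gamma_L$ with $\on{Ker}(\psi)$ by adapting the three-way equivalence in the proof of \Cref{galois-of-decomposition}. For any intermediate field $F \subset K \subset F_s$, the scheme $P_K$ is a disjoint union of copies of $\on{Spec}(K)$ if and only if $\Gamma_K$ acts trivially on $P(F_s)$. By the explicit formula above, this is equivalent to $c(\sigma) = e$ and $\varphi(\sigma) = \on{id}$ for every $\sigma \in \Gamma_K$, i.e.\ to $\Gamma_K \subset \on{Ker}(\psi)$. Taking $K = L$ gives $\Gamma_L = \on{Ker}(\psi)$, hence $\on{Gal}(L/F) \cong \on{Im}(\psi) \leqslant G_0 \rtimes \on{Aut}(G_0)$. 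The assertion $P(L) \neq \emptyset$ is then immediate from the definition of $L$: over $L$ the scheme $P_L$ is a disjoint union of copies of $\on{Spec}(L)$ and thus admits an obvious $L$-point.

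Finally, composing $\psi$ with the projection $G_0 \rtimes \on{Aut}(G_0) \twoheadrightarrow \on{Aut}(G_0)$ recovers $\varphi$ by construction, so the restriction of this projection to $\on{Im}(\psi)$ has image $\on{Im}(\varphi)$ and kernel $\on{Im}(\psi) \cap G_0 \leqslant G_0$. This exhibits $\on{Gal}(L/F) \cong \on{Im}(\psi)$ as an extension of $\on{Im}(\varphi)$ by a subgroup of $G_0$, as desired. I expect the only delicate point to be fixing consistent conventions for the left/right action on $P(F_s)$ and for multiplication in $G_0 \rtimes \on{Aut}(G_0)$ so that $\psi$ really is a homomorphism; once this bookkeeping is done, the argument is a direct twisted analogue of \Cref{galois-of-decomposition}.
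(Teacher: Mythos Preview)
Your proof is correct and takes a somewhat different, more direct route than the paper's. The paper proceeds by introducing the decomposition field $K$ of $G$ as an intermediate step: it first shows $K \subset L$ (using the isomorphism $G_L \times_L P_L \xrightarrow{\sim} P_L \times_L P_L$ furnished by the torsor structure), then invokes the Galois-theory short exact sequence $1 \to \on{Gal}(L/K) \to \on{Gal}(L/F) \to \on{Gal}(K/F) \to 1$, and finally applies \Cref{galois-of-decomposition} twice, once to identify $\on{Gal}(K/F)$ with $\on{Im}(\varphi)$ and once to identify $\on{Gal}(L/K)$ with a subgroup of $G_0$ (the latter over $K$, where $G_K$ is constant). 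You instead package the whole $\Gamma_F$-action on $P(F_s)$ into a single homomorphism $\psi$ into the holomorph $G_0 \rtimes \on{Aut}(G_0)$ and read off the extension structure from the projection onto $\on{Aut}(G_0)$. Your approach avoids the auxiliary field $K$ and, as a byproduct, yields the slightly sharper statement that $\on{Gal}(L/F)$ embeds in $G_0 \rtimes \on{Aut}(G_0)$; the paper's approach makes the role of $K$ (the field over which $G$ becomes constant) more transparent. The two are equivalent in spirit: your $\on{Ker}(\varphi)$ is exactly $\Gamma_K$ in the paper's notation, and the paper's exact sequence is the one you obtain by projecting $\on{Im}(\psi)$ to $\on{Aut}(G_0)$.
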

	
	\begin{proof}
		Since $P_L$ is a non-empty disjoint union of copies of $\on{Spec}(L)$, we have $P(L)\neq \emptyset$. Let $F\subset K\subset F_s$ be the decomposition field of $G$.
		
		\begin{claim}\label{claim-1}
			We have $K\subset L$. In particular, $L$ is also the decomposition field of the finite \'etale $K$-scheme $P_K$.
		\end{claim}
		
		\begin{proof}
			Since $P$ is a $G$-torsor, the $G$-action on $P$ induces an isomorphism of $L$-schemes $G_L\times_LP_L\xrightarrow{\sim} P_L\times_LP_L$. The domain of this isomorphism is a disjoint union of copies of $G_L$, while the codomain is a disjoint union of copies of $\on{Spec}(L)$. We deduce that $G_L$ is a disjoint union of copies of $\on{Spec}(L)$. Thus $K\subset L$, as claimed.
		\end{proof}
		By \Cref{claim-1} and Galois theory, we obtain a short exact sequence
		\begin{equation}\label{sequence-galois-theory}
			\begin{tikzcd}
				1 \arrow[r] & \on{Gal}(L/K) \arrow[r] & \on{Gal}(L/F) \arrow[r] & \on{Gal}(K/F) \arrow[r] & 1.    
			\end{tikzcd}
		\end{equation}
		It remains to identify the first and third groups in (\ref{sequence-galois-theory}). By \Cref{galois-of-decomposition}, applied to the discrete group $\on{Aut}(G_0)$ and the $\on{Aut}(G_0)$-torsor of isomorphisms between $G_0$ and $G$, we have $\on{Gal}(K/F) \cong \on{Im}(\varphi)$. 
        By \Cref{claim-1}, $L$ is the decomposition field of the finite \'etale $K$-scheme $P_K$. Since $G_K$ is discrete, it is the constant $K$-group associated to $G(K)=G_0$. By \Cref{galois-of-decomposition}, applied to $G_0$ over $K$, we deduce that $\on{Gal}(L/K)$ is isomorphic to a subgroup of $G_0$.
	\end{proof}
	
\section{Splitting torsors under abelian varieties}
	Let $F$ be a field, and let $A$ be an abelian variety over $F$. A restriction-corestriction argument shows that the abelian group $H^1(F,A)$ is torsion. For every $n\geqslant 1$, the short exact sequence
	\begin{equation}\label{multiplication-by-n}
		\begin{tikzcd}
			0 \arrow[r] & A[n] \arrow[r] & A \arrow[r,"\times n"] & A \arrow[r] & 0  
		\end{tikzcd}
	\end{equation}
	shows that the natural map $H^1(F,A[n])\to H^1(F,A)[n]$ is surjective.
	
	\begin{prop}\label{split-torsor-abelian-variety}
		Let $F$ be a field, let $A$ be an abelian variety of dimension $g\geqslant 1$, let $T$ be an $A$-torsor over $F$, and let $n\geqslant 1$ be the period of the class of $T$ in $H^1(F,A)$. Assume that $F$ contains a primitive $n$th root of unity. Then there exists a Galois field extension $L/F$ such that $T(L)\neq\emptyset$ and such that $\on{Gal}(L/F)$ is an extension of a subgroup of $\mathrm{SL}_{2g}(\Z/n\Z)$ by a subgroup of $(\Z/n\Z)^{2g}$. 
	\end{prop}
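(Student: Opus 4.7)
The plan is to reduce to the finite étale setting where Proposition~\ref{split-completely-torsor} applies, by lifting $T$ through the Kummer-type sequence (\ref{multiplication-by-n}) to an $A[n]$-torsor, and then to use the Weil pairing on $A[n]$ to pin the outer Galois image down inside $\mathrm{SL}_{2g}(\Z/n\Z)$.

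First, I would observe that because $F$ contains a primitive $n$-th root of unity, the characteristic of $F$ is coprime to $n$, so $A[n]$ is a finite étale $F$-group scheme with $A[n](F_s)\cong(\Z/n\Z)^{2g}$ as an abstract abelian group. Since the period of $[T]\in H^1(F,A)$ equals $n$ and the coboundary $H^1(F,A[n])\to H^1(F,A)[n]$ coming from (\ref{multiplication-by-n}) is surjective, I can choose an $A[n]$-torsor $P$ over $F$ mapping to $[T]$. Pushing out $P$ along the inclusion $A[n]\hookrightarrow A$ gives the standard $F$-morphism $P\to T$, so any extension of $F$ over which $P$ has a rational point automatically splits $T$.

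Next, I would apply \Cref{split-completely-torsor} to $P$, viewed as a torsor under the finite étale $F$-group $A[n]$, with $G_0=A[n](F_s)\cong(\Z/n\Z)^{2g}$ and $\varphi\colon\Gamma_F\to\on{Aut}(G_0)=\mathrm{GL}_{2g}(\Z/n\Z)$ the associated Galois representation. Let $L\subset F_s$ be the decomposition field of $P$. Then $P(L)\neq\emptyset$, hence $T(L)\neq\emptyset$, and I obtain a short exact sequence
\[
1\longrightarrow H\longrightarrow \on{Gal}(L/F)\longrightarrow \on{Im}(\varphi)\longrightarrow 1,
\]
in which $H$ is a subgroup of $(\Z/n\Z)^{2g}$ and $\on{Im}(\varphi)$ is a subgroup of $\mathrm{GL}_{2g}(\Z/n\Z)$.

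The only substantive step remaining is to improve $\mathrm{GL}_{2g}$ to $\mathrm{SL}_{2g}$, and for this I would invoke the Weil pairing: the non-degenerate, alternating, Galois-equivariant bilinear form $e_n\colon A[n]\times A[n]\to \mu_n$. Since $F$ contains a primitive $n$-th root of unity, the Galois module $\mu_n(F_s)$ is trivial, so the $\Gamma_F$-action on $A[n](F_s)$ preserves $e_n$. Therefore $\on{Im}(\varphi)\leqslant \mathrm{Sp}_{2g}(\Z/n\Z)\leqslant \mathrm{SL}_{2g}(\Z/n\Z)$, which yields the description of $\on{Gal}(L/F)$ claimed in the proposition. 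There is no real obstacle here beyond correctly invoking the Weil pairing and checking that the lifted torsor $P$ really transfers splitting to $T$; the rest is an immediate application of the machinery already established in Section~\ref{section-3}.
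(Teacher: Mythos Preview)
Your reduction to an $A[n]$-torsor and the application of \Cref{split-completely-torsor} are exactly as in the paper. The gap is in the last step, where you invoke ``the non-degenerate, alternating, Galois-equivariant bilinear form $e_n\colon A[n]\times A[n]\to \mu_n$.'' No such pairing exists in general: the Weil pairing is a perfect duality $e_n\colon A[n]\times A^{\vee}[n]\to\mu_n$ between $A[n]$ and the $n$-torsion of the \emph{dual} abelian variety. To obtain an alternating form on $A[n]$ itself one must compose with a polarization $\lambda\colon A\to A^{\vee}$, and the resulting form $e_n(\,\cdot\,,\lambda(\cdot))$ is non-degenerate only when $\lambda$ is an isomorphism on $n$-torsion, e.g.\ when $A$ is principally polarized. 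For $g=1$ this is automatic, but for $g\geqslant 2$ there exist abelian varieties (even over algebraically closed fields) admitting no principal polarization, so you cannot conclude that $\on{Im}(\varphi)\leqslant\mathrm{Sp}_{2g}(\Z/n\Z)$. The paper makes exactly this point in the remark immediately following the proposition.

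The paper sidesteps the polarization issue by arguing with \'etale cohomology instead: it identifies $A[n](F_s)$ with the dual of $H^1_{\textrm{\'et}}(A_{F_s},\Z/n\Z)$, then uses the cup-product isomorphism $\Lambda^{2g}H^1_{\textrm{\'et}}(A_{F_s},\Z/n\Z)\cong H^{2g}_{\textrm{\'et}}(A_{F_s},\Z/n\Z)$ together with Poincar\'e duality to see that $\Lambda^{2g}(A[n](F_s))\cong\mu_n(F_s)^{\otimes g}$ as $\Gamma_F$-modules. Since $F$ contains a primitive $n$th root of unity, this module is trivial, hence $\det\varphi=1$. This yields only $\on{Im}(\varphi)\leqslant\mathrm{SL}_{2g}(\Z/n\Z)$, which is weaker than your claimed $\mathrm{Sp}_{2g}$ but is all that holds in general and is all that the statement asserts.
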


	\begin{proof}
		We apply \Cref{split-completely-torsor} to the finite $F$-group $G\coloneqq A[n]$, which is \'etale because $n$ is invertible in $F$, and to an $A[n]$-torsor $P$ inducing $T$ via (\ref{multiplication-by-n}). Thus $G_0\coloneqq G(F_s)$ is isomorphic to $(\Z/n\Z)^{2g}$, so that $\on{Aut}(G_0)\cong \mathrm{GL}_{2g}(\Z/n\Z)$. Let $F\subset L\subset F_s$ be the decomposition field of $P$, and let $\varphi\colon \Gamma_F\to \mathrm{GL}_{2g}(\Z/n\Z)$ be a homomorphism corresponding to $G$ via (\ref{forms}). Then $T(L)\neq\emptyset$ and, by \Cref{split-completely-torsor}, the Galois group $\on{Gal}(L/F)$ is an extension of $\on{Im}(\varphi)$ by a subgroup of $(\Z/n\Z)^{2g}$. It remains to show that $\on{Im}(\varphi)$ is contained in $\mathrm{SL}_{2g}(\Z/n\Z)$. This follows from the fact that $F$ contains a primitive $n$th root of unity. The proof is standard; we include it here for lack of reference.
		
		For every $n$-torsion $\Gamma_F$-module $M$, we let $M^\vee\coloneqq \on{Hom}(M,\Z/n\Z)$ be the dual $\Gamma_F$-module of $M$. By \cite[Theorem 15.1(i)]{milne1986abelian}, we have an isomorphism of $\Gamma_F$-modules 
		\[G(F_s)=A[n](F_s)\cong H^1_{\textrm{\'et}}(A_{F_s},\Z/n\Z)^\vee.\]
		By the computation of the \'etale cohomology of abelian varieties over separably closed fields \cite[Theorem 15.1(ii)]{milne1986abelian} and Poincar\'e duality \cite[Theorem 11.1]{milne1980etale}, we have $\Gamma_F$-equivariant isomorphisms \[\Lambda^{2g}H^1_{\textrm{\'et}}(A_{F_s},\Z/n\Z)\cong H^{2g}_{\textrm{\'et}}(A_{F_s},\Z/n\Z)\cong \mu_n(F_s)^{\otimes (-g)}\cong \Z/n\Z.\]
		Thus $\Lambda^{2g}(G(F_s))\cong \Z/n\Z$ as $\Gamma_F$-modules, that is, the $\Gamma_F$-action on $\Lambda^{2g}(G(F_s))$ is trivial. Equivalently, the image of $\varphi$ is contained in $\mathrm{SL}_{2g}(\Z/n\Z)$, as desired.
	\end{proof}

    \begin{rmk}
        With the notation of \Cref{split-torsor-abelian-variety}, if $A$ is principally polarized, then the Weil pairing endows $A[n](F_s)$ with a Galois-invariant non-degenerate symplectic form. Consequently, $\on{Gal}(L/F)$ is an extension of a subgroup of $\on{Sp}_{2g}(\Z/n\Z)$ by a subgroup of $(\Z/n\Z)^{2g}$. However, when $g\geqslant 2$, not every abelian variety is principally polarized, even over an algebraically closed field, and therefore we cannot, in general, replace $\mathrm{SL}_{2g}(\Z/n\Z)$ by $\on{Sp}_{2g}(\Z/n\Z)$ in \Cref{split-torsor-abelian-variety}.
    \end{rmk}
	
	\section{The \texorpdfstring{$p$}{p}-rank of \texorpdfstring{$\mathrm{SL}_{2g}(\Z/p^e\Z)$}{SL2g(Z/peZ)}}\label{section-4}
	
	Let $p$ be a prime, let $e$ be a positive integer, and let $G$ be a smooth affine group over the ring of $p$-adic integers $\Z_p$. For each $1\leqslant j\leqslant e-1$, the reduction map \[\pi_j\colon G(\Z/p^e\Z) \to G(\Z/p^j\Z)\] is a surjective group homomorphism. We let $H_j\leqslant G(\Z/p^e\Z)$ be the kernel of $\pi_j$.
	
	\begin{lemma}\label{claim-order-p}
		Let $p$ be a prime, let $N\geqslant 1$ be an integer, let $G$ be a smooth closed subgroup of $\mathrm{GL}_N$  of relative dimension $d$ over the ring of $p$-adic integers $\Z_p$.
		\begin{enumerate}
			\item If $p$ is odd, every element of order $p$ in $H_1$ lies in $H_{e-1}$. 
			\item If $p=2$ and $e\geqslant 2$, every element of order $2$ in $H_2$ lies in $H_{e-1}$.
		\end{enumerate}
	\end{lemma}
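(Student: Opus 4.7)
The plan is to reduce to a direct matrix computation in $\mathrm{GL}_N(\Z/p^e\Z)$. Since $G$ embeds as a closed subgroup of $\mathrm{GL}_N$, any relation $g^p = I$ in $G(\Z/p^e\Z)$ is a relation among matrices, so one can work entirely inside $\mathrm{GL}_N$; the smoothness of $G$ and the relative dimension $d$ play no role here. I would then prove the following inductive step: if $g \in H_j$ has order dividing $p$ and $j \leqslant e - 2$, then in fact $g \in H_{j+1}$. Iterating this starting from $j = 1$ (respectively $j = 2$ when $p = 2$) gives $g \in H_{e-1}$ as desired. The cases $e \leqslant 2$ for $p$ odd and $e \leqslant 3$ for $p = 2$ are vacuous or trivially true.

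For the inductive step, I would write $g = I + p^j X$ with $X$ a lift of a matrix over $\Z/p^{e-j}\Z$, noting that $g \in H_{j+1}$ is equivalent to $X \equiv 0 \pmod{p}$. Expanding by the binomial theorem,
\[
g^p \;=\; I + p^{j+1} X + \sum_{k=2}^{p-1}\binom{p}{k} p^{jk} X^k + p^{pj} X^p,
\]
and tracking $p$-adic valuations, the linear term has valuation exactly $j+1$ while every higher-order term has valuation at least $j+2$, under the hypothesis $j \geqslant 1$ for $p$ odd or $j \geqslant 2$ for $p = 2$. Comparing with $g^p = I$ modulo $p^{j+2}$ (legal since $j+2 \leqslant e$) then forces $X \equiv 0 \pmod{p}$, completing the inductive step.

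The main obstacle is the valuation bookkeeping in the binomial expansion, and in particular the dichotomy between $p$ odd and $p = 2$. For $p$ odd, the non-linear contributions have valuations $1 + jk$ (for $2 \leqslant k \leqslant p-1$) and $pj$ (for $k=p$), both of which exceed $j+1$ as soon as $j \geqslant 1$. For $p = 2$ the quadratic term is $2^{2j} X^2$, which overtakes the linear term $2^{j+1}X$ precisely when $j = 1$; indeed $-1 \in \mathrm{GL}_1(\Z/2^e\Z)$ has order $2$ and lies in $H_1 \setminus H_{e-1}$ for $e \geqslant 3$, showing that the assumption $j \geqslant 2$ for $p = 2$ cannot be relaxed. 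Once these valuation estimates are in place, the iteration is routine.
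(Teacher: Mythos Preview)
Your proposal is correct and follows essentially the same approach as the paper: reduce to $\mathrm{GL}_N$, write an element of $H_j$ as $I+p^jX$, expand $(I+p^jX)^p$ via the binomial theorem, and compare $p$-adic valuations to see that modulo $p^{j+2}$ the equation $g^p=I$ forces $p\mid X$, iterating from $j=1$ (resp.\ $j=2$) up to $e-1$. Your valuation bookkeeping and the counterexample $-1\in\mathrm{GL}_1(\Z/2^e\Z)$ explaining the need for $j\geqslant 2$ when $p=2$ are both accurate additions.
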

	
	\begin{proof}
		Using the closed subgroup embedding of $G$ in $\mathrm{GL}_N$, we see that it suffices to prove \Cref{claim-order-p} when $G=\mathrm{GL}_N$. In this case, for every $N \geqslant 1$ we have a short exact sequence
		\[
		\begin{tikzcd}
			0\arrow[r] &  \Mat_N(\Z/p\Z) \arrow[r,"\iota"] & \mathrm{GL}_N(\Z/p^e\Z)\arrow[r,"\pi_{e-1}"] &\mathrm{GL}_N(\Z/p^{e-1}\Z)\arrow[r] &  1.
		\end{tikzcd}
		\]
		Here $\Mat_N(\Z/p\Z)$ denotes the abelian group of $N\times N$ matrices with coefficients in $\Z/p\Z$, and the map $\iota$ sends $M\in \Mat_N(\Z/p\Z)$ to $I+p^{e-1}\tilde{M}$, where $\tilde{M}\in \Mat_N(\Z/p^e\Z)$ is a lift of $M$ and $I$ denotes the $N\times N$ identity matrix.
		
		It suffices to prove, for $1\leqslant j \leqslant e-2$ if $p$ is odd and for $2\leqslant j \leqslant e- 2$ if $p=2$, that every order-$p$ element in $H_j$ lies in $H_{j+1}$. By definition, $B \in \mathrm{GL}_N(\Z/p^e\Z)$ is in $H_j$ if and only if it is of the form 
		\[ B = I + p^j A \]
		for some $A \in \Mat_N(\Z/p^e\Z)$. Our goal is to show that if $B$ has order $p$ in $\mathrm{GL}_N(\Z/p^e \Z)$, then $A$ is divisible by $p$, that is, $A$ lies in $p\cdot\Mat_N(\Z/ p^e \Z)$. If we can prove this, then $B$ lies in
		$H_{j+1}$, and we are done.
		
		Our assumption that $B$ is of order $p$ in $\mathrm{GL}_N(\Z/p^e\Z)$ translates to
		\begin{equation} \label{e.p-power} I = B^p = (I + p^j A)^p = I +  \binom{p}{1} p^j A + \binom{p}{2} p^{2j} A^2 + \binom{p}{3} p^{3j} A^3 + \cdots + \binom{p}{p} p^{pj} A^p. \end{equation}
		
		(1) Suppose that $p$ is odd and $1 \leqslant j \leqslant e-2$. Then every term on the right hand side of~\eqref{e.p-power}, after the first two, is divisible by $p^{j + 2}$. Reducing both sides modulo $p^{j + 2}$, we obtain $I = I + p^{j+1} A$ in $\mathrm{GL}_N(\Z/p^{j+2} \Z)$. Thus $A$ is divisible by $p$, as desired. 
		
		(2) Now suppose that $p=2$ and $2 \leqslant j \leqslant e-2$. In this case~\eqref{e.p-power} takes the form $I=B^2=I+2^{j+1}A+2^{2j}A^2$. Reducing both sides modulo $2^{j + 2}$, we obtain $I = I + 2^{j+1} A$ in $\mathrm{GL}_N(\Z/2^{j+2} \Z)$. Thus $A$ is divisible by $2$, as desired. 
	\end{proof}

\begin{defin}	For a prime $p$ and a finite group $\Gamma$, the $p$-rank of $\Gamma$, denoted by $\on{rank}_p(\Gamma)$, is the largest integer $r\geqslant 0$ such that $\Gamma$ has a subgroup isomorphic to $(\Z/p\Z)^r$.
\end{defin}

	\begin{prop}\label{prop-p-rank} Let $p$ be a prime, let $N\geqslant 1$ be an integer, let $G$ be a smooth closed subgroup of $\mathrm{GL}_N$  of relative dimension $d$ over the ring of $p$-adic integers $\Z_p$. We have
		\[
		\on{rank}_p(G(\Z/p^e\Z)) \leqslant 
		\begin{cases}
			d+\on{rank}_p(G(\Z/p\Z)) & (p>2,\, e\geqslant 1), \\
			d+\on{rank}_2(G(\Z/4\Z)) & (p = 2,\, e\geqslant 2).
		\end{cases}
		\]
	\end{prop}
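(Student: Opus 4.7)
The plan is to pick a maximal elementary abelian $p$-subgroup $V\leqslant G(\Z/p^e\Z)$, so $V\cong (\Z/p\Z)^r$ with $r = \on{rank}_p(G(\Z/p^e\Z))$, and to bound $r$ by analyzing how $V$ interacts with the filtration $\{H_j\}$. If $e=1$ (resp.\ if $p=2$ and $e=2$) the stated inequality is tautological, so one may assume $e\geqslant 2$ (resp.\ $e\geqslant 3$) in what follows.

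The key step is to split $V$ using the reduction homomorphism $\pi_j\colon G(\Z/p^e\Z)\to G(\Z/p^j\Z)$, taking $j=1$ for $p$ odd and $j=2$ for $p=2$. The image $\pi_j(V)$ is an elementary abelian $p$-subgroup of $G(\Z/p^j\Z)$, hence of rank at most $\on{rank}_p(G(\Z/p^j\Z))$. The kernel $V\cap H_j$ of $\pi_j|_V$ consists of elements whose order divides $p$ and which lie in $H_j$, so \Cref{claim-order-p} applies to give $V\cap H_j\subseteq H_{e-1}$.

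It remains to bound $\on{rank}_p(V\cap H_{e-1})$. Here I invoke the smoothness hypothesis on $G$: inside $\mathrm{GL}_N$, the assignment $I+p^{e-1}\tilde{M}\mapsto M$ identifies $H_{e-1}(\mathrm{GL}_N)$ with the additive group $\Mat_N(\F_p)$ (the cross-term $p^{2(e-1)}\tilde M_1\tilde M_2$ vanishes modulo $p^e$ since $e\geqslant 2$), and the smoothness of $G$ of relative dimension $d$ over $\Z_p$ identifies $H_{e-1}(G)$ with a subgroup of order $p^d$ of $\Mat_N(\F_p)$, namely $\on{Lie}(G)(\F_p)$. Hence $H_{e-1}(G)\cong (\Z/p\Z)^d$ and $\on{rank}_p(V\cap H_{e-1})\leqslant d$. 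Combining the two estimates,
\[
r=\on{rank}_p\bigl(V/(V\cap H_j)\bigr)+\on{rank}_p(V\cap H_j)\leqslant \on{rank}_p(G(\Z/p^j\Z))+d,
\]
which is the desired inequality in each of the two cases.

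The only ingredient beyond \Cref{claim-order-p} is the identification $H_{e-1}(G)\cong (\Z/p\Z)^d$, and given the explicit matrix form of $H_{e-1}(\mathrm{GL}_N)$ this follows directly from smoothness over $\Z_p$; the overall argument is essentially short filtration bookkeeping once \Cref{claim-order-p} is in hand, so I do not anticipate a serious obstacle.
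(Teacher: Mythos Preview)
Your proposal is correct and follows essentially the same argument as the paper: take an elementary abelian $p$-subgroup $\Lambda$ of $G(\Z/p^e\Z)$, use \Cref{claim-order-p} to show that $\Lambda\cap H_j=\Lambda\cap H_{e-1}$ (with $j=1$ for $p$ odd, $j=2$ for $p=2$), bound the kernel by $d$ via $H_{e-1}\cong(\Z/p\Z)^d$, and bound the image $\pi_j(\Lambda)$ by $\on{rank}_p(G(\Z/p^j\Z))$. The only cosmetic difference is that the paper obtains $H_{e-1}\cong\mathfrak{g}(\F_p)$ by citing \cite{begueri1980dualite} and \cite{milne2006arithmetic}, whereas you argue directly via the explicit description inside $\mathrm{GL}_N$; your explicit handling of the base cases $e=1$ and $(p,e)=(2,2)$ is also a bit more careful than the paper's.
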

	
	\begin{proof} 		
		The cases $(p>2,\, e=1)$ and $(p=2,\, e=2)$ are immediate. Thus we may assume
that $e\geqslant 2$ if $p$ is odd, and $e\geqslant 3$ if $p=2$. Let $\Lambda \cong (\Z/ p \Z)^r$ be a subgroup of $G(\Z/ p^e \Z)$, and let $\mathfrak{g}$ be the Lie algebra of the mod $p$ reduction of $G$. Since $G$ is smooth over $\Z_p$, by \cite[Lemme 4.1.1]{begueri1980dualite} (see also \cite[Proposition III.4.3]{milne2006arithmetic}) we have a group isomorphism $H_{e-1}\cong \mathfrak{g}(\F_p)$. Thus $H_{e-1}\cong (\Z/p\Z)^d$, so that $\on{rank}_p(H_{e-1}\cap\Lambda)\leqslant d$. 
        
        Suppose first that $p$ is odd. Since $\pi_1(\Lambda)$ is a subgroup of $G(\Z/p\Z)$, we have the inequality $\on{rank}_p(\pi_1(\Lambda))\leqslant \on{rank}_p(G(\Z/p\Z))$.  As $\Lambda$ is $p$-torsion, by \Cref{claim-order-p}(1) we have a short exact sequence
		\[
		\begin{tikzcd}
			0\arrow[r] & H_{e-1}\cap\Lambda \arrow[r] & \Lambda \arrow[r,"\pi_1"] & \pi_1(\Lambda) \arrow[r] &  0.
		\end{tikzcd}
		\]
		We conclude that \[r=\on{rank}_p(H_{e-1}\cap\Lambda)+\on{rank}_p(\pi_1(\Lambda))\leqslant  d+\on{rank}_p(G(\Z/p\Z)),\]
		as desired. This completes the proof for $p>2$ and $e\geqslant 1$. The proof for the case $p=2$ and $e\geqslant 2$ is analogous, with $\pi_1$ replaced by $\pi_2$ and \Cref{claim-order-p}(1) by \Cref{claim-order-p}(2) in the above argument. 
	\end{proof}

	\begin{cor}\label{cor-p-rank}
		Let $p$ be a prime, and let $g\geqslant 1$ and $e\geqslant 0$ be integers.
		\begin{enumerate}
			\item 
        We have
			\[
	\on{rank}_p(\mathrm{SL}_{2g}(\Z/p^e\Z)) \leqslant 
			\begin{cases}
				5g^2-1 & (p>2), \\
				9g^2-2 & (p = 2).
			\end{cases}
	\]
			\item 
 We have
			\[
		\on{rank}_p(\mathrm{SL}_2(\Z/p^e\Z)) \leqslant 
			\begin{cases}
				3 & (p>2), \\
				4 & (p = 2),
			\end{cases}
			\]
            with equality if $(p>2,e\geqslant 2)$ or $(p=2,e\geqslant 3)$.
		\end{enumerate}
	\end{cor}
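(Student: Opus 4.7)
Both parts follow from applying Proposition~\ref{prop-p-rank} to $G=\mathrm{SL}_N$, regarded as a smooth closed subgroup of $\mathrm{GL}_N$ over $\Z_p$ of relative dimension $d=N^2-1$, with $N=2g$ for part~(1) and $N=2$ for part~(2). The bounds then reduce to estimates for $\on{rank}_p(\mathrm{SL}_N(\F_p))$ (for $p$ odd) or $\on{rank}_2(\mathrm{SL}_N(\Z/4\Z))$ (for $p=2$).

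For part~(1), I would invoke the classical fact that a maximal elementary abelian $p$-subgroup of $\mathrm{GL}_n(\F_p)$ has rank $\lfloor n^2/4\rfloor$, realized by the block-unipotent subgroup consisting of matrices $I+X$ with $X$ an arbitrary $g\times g$ upper-right block; for $n=2g$ this gives $g^2$. Substituting into Proposition~\ref{prop-p-rank} yields $(4g^2-1)+g^2=5g^2-1$ for $p>2$. For $p=2$, I would use the short exact sequence $1\to H_1\to\mathrm{SL}_{2g}(\Z/4\Z)\to\mathrm{SL}_{2g}(\F_2)\to 1$, whose kernel $H_1\cong\mathfrak{sl}_{2g}(\F_2)\cong(\Z/2\Z)^{4g^2-1}$ is elementary abelian (since $(I+2A)^2\equiv I\pmod 4$); combined with the same $g^2$ bound on the quotient, this gives $\on{rank}_2(\mathrm{SL}_{2g}(\Z/4\Z))\leqslant 5g^2-1$, and Proposition~\ref{prop-p-rank} then yields $(4g^2-1)+(5g^2-1)=9g^2-2$.

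For part~(2), Proposition~\ref{prop-p-rank} is off by at least one, and a refinement specific to $\mathrm{SL}_2$ is needed. For $p$ odd, let $\Lambda\leqslant\mathrm{SL}_2(\Z/p^e\Z)$ be an elementary abelian $p$-subgroup. If $\pi_1(\Lambda)=1$, then Lemma~\ref{claim-order-p}(1) forces $\Lambda\subseteq H_{e-1}\cong(\Z/p\Z)^3$, giving $\on{rank}(\Lambda)\leqslant 3$. Otherwise some $g\in\Lambda$ has $\pi_1(g)$ conjugate to the standard unipotent $u$, and commutativity of $\Lambda$ constrains $\pi_1(\Lambda)$ to the $p$-part of $C_{\mathrm{SL}_2(\F_p)}(u)=\{\pm I\}\cdot\langle u\rangle$ (of $p$-rank $1$) and $\Lambda\cap H_{e-1}$ to the adjoint centralizer of $u$ in $\mathfrak{sl}_2(\F_p)$, which is one-dimensional; thus $\on{rank}(\Lambda)\leqslant 1+1=2$. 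The bound of $3$ is attained by $H_{e-1}$ for $e\geqslant 2$, while $\on{rank}_p(\mathrm{SL}_2(\F_p))=1$ handles $e=1$.

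For $p=2$ I would bypass the Proposition via Cayley--Hamilton: the identity $g^2-\on{tr}(g)\,g+I=0$ combined with $g^2=I$ yields $\on{tr}(g)\cdot g=2I$, and a short induction on $e$ (using the determinant condition to rule out stray idempotent lifts) shows $g\equiv\pm I\pmod{2^{e-1}}$ for all $e\geqslant 2$. Consequently the order-$\leqslant 2$ elements form precisely the set $\{\pm I+2^{e-1}M:M\in\mathfrak{sl}_2(\F_2)\}$, of cardinality $16$ for $e\geqslant 3$ and $8$ for $e=2$ (when $-I\in H_{e-1}$ causes the two cosets to coincide). A direct expansion using $2(e-1)\geqslant e$ confirms these elements pairwise commute, so they form a single elementary abelian 2-subgroup of rank $4$ (for $e\geqslant 3$) or $3$ (for $e=2$); for $e=1$, $\mathrm{SL}_2(\F_2)\cong S_3$ has $2$-rank~$1$. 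The main obstacle in part~(2) is the Cayley--Hamilton analysis at the boundary $e=3$, where the new square roots of unity $\pm 1+2^{e-1}$ in $(\Z/2^e\Z)^\times$ first appear and must be handled carefully in the induction.
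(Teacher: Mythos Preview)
Your proposal is correct and follows essentially the same strategy as the paper: Proposition~\ref{prop-p-rank} plus $\on{rank}_p(\mathrm{SL}_{2g}(\F_p))=g^2$ for part~(1), and a refined analysis of $\mathrm{SL}_2$ for part~(2). A few minor differences are worth noting.

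For part~(2) with $p$ odd, the paper argues by contradiction: if $\Lambda\cong(\Z/p\Z)^4$, then $\Lambda$ must contain all of $H_{e-1}$ together with a lift $B$ of $\begin{pmatrix}1&1\\0&1\end{pmatrix}$, and one checks directly that $B$ fails to commute with $\begin{pmatrix}1&0\\p^{e-1}&1\end{pmatrix}\in H_{e-1}$. Your centralizer computation is the structural version of the same observation and in fact yields the sharper dichotomy $\on{rank}_p(\Lambda)\leqslant 3$ (central case) or $\leqslant 2$ (non-central case). Note that your bound $1+1$ tacitly uses $\Lambda\cap H_1=\Lambda\cap H_{e-1}$, which again requires Lemma~\ref{claim-order-p}(1).

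For $p=2$, the paper treats $e=2$ by a separate Sylow argument (the $2$-Sylow of $\mathrm{SL}_2(\Z/4\Z)$ has order $16$ and contains an element of order~$4$), while you fold this case into the Cayley--Hamilton analysis. Your ``induction on $e$'' is unnecessary: from $\on{tr}(g)\cdot g=2I$ in $\Mat_2(\Z/2^e\Z)$ one reads off directly that $\on{tr}(g)=2u$ and $g\equiv uI\pmod{2^{e-1}}$, and then $\det(g)=1$ forces $u^2\equiv 1\pmod{2^e}$, so $u\equiv\pm 1$ or $\pm 1+2^{e-1}$; the latter is absorbed into the diagonal of the $2^{e-1}M$ term. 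This is precisely the paper's route and avoids the inductive bookkeeping you flag as the main obstacle.
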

	
	\begin{proof}
When $e=0$, there is nothing to prove. From now on, we assume $e\geqslant 1$.
    
		(1)  By~\cite[Table 3.3.1 on p.~108]{gorenstein1998classification}, $\on{rank}_p(\mathrm{SL}_{2g}(\Z/p\Z))=g^2$ for any prime $p$ and any $g\geqslant 1$. Applying \Cref{prop-p-rank} to 
		$G = \mathrm{SL}_{2g}$ and remembering that $d = \dim(G) = 4g^2-1$, we obtain the desired inequalities when $p>2$ and also when $p=2$ and $e=1$.
		
		Now suppose that $p=2$ and $e\geqslant 2$. From the short exact sequence
		\[\begin{tikzcd}
			0\arrow[r] & \Mat_{2g}(\Z/2\Z)_0 \arrow[r] & \mathrm{SL}_{2g}(\Z/4\Z) \arrow[r] & \mathrm{SL}_{2g}(\Z/2\Z) \arrow[r] & 1    
		\end{tikzcd}\]
		where $\Mat_{2g}(\Z/2\Z)_0\subset \Mat_{2g}(\Z/2\Z)$ denotes the subgroup of trace-zero matrices, we deduce 
		\[\on{rank}_2(\mathrm{SL}_{2g}(\Z/4\Z))\leqslant \on{rank}_2(\Mat_{2g}(\Z/2\Z)_0)+\on{rank}_2(\mathrm{SL}_{2g}(\Z/2\Z))=4g^2-1+g^2=5g^2-1.\] 
		The desired inequality now follows from \Cref{prop-p-rank} for any $e \geqslant 2$.
		
		\smallskip
		(2) Since the order of $\mathrm{SL}_2(\Z/p\Z)$ is $p(p^2-1)$, 
        any $p$-Sylow subgroup of $\mathrm{SL}_2(\Z/p\Z)$ is  conjugate to the cyclic subgroup $U$ of order $p$ generated by 
        $\begin{pmatrix} 1 & 1 \\
        0 & 1 \end{pmatrix}$. In particular, we may assume that $e\geqslant 2$.
  
  First assume that $p$ is odd. Since $\mathrm{SL}_2(\Z/ p^e \Z)$ contains $H_{e-1} \cong (\Z/ p \Z)^3$, we have $3 \leqslant \on{rank}_p (\mathrm{SL}_2(\Z/p^e\Z)) \leqslant 4$, where the last inequality follows from part (1). Our goal is thus to show that $\on{rank}_p(\mathrm{SL}_2(\Z/p^e\Z))$ cannot be $4$.
We argue by contradiction: suppose that $\mathrm{SL}_2(\Z/ p^e \Z)$ contains a subgroup $\Lambda \cong (\Z/p\Z)^4$. Consider the exact sequence 
\[\begin{tikzcd}
    0 \arrow[r] &  \Lambda \cap H_1 \arrow[r] & \Lambda \arrow[r] & \pi_1(\Lambda) \arrow[r] & 0.
\end{tikzcd}
\]
By \Cref{claim-order-p}(1),  $H_1 \cap \Lambda = H_{e-1}\cap \Lambda$ is a subgroup of $H_{e-1} \cong (\Z/p \Z)^3$. Thus $\Lambda \cong (\Z/ p\Z)^4$ is only possible if $\Lambda$ contains $H_{e-1}$ and $\pi_1(\Lambda)$ is conjugate to $U$.  After replacing $\Lambda$ by a conjugate subgroup in $\mathrm{SL}_2(\Z/p^e\Z)$, we may assume that $\Lambda$ contains both $H_{e-1}$ and a matrix $B$ of the form 
\[     B = \begin{pmatrix} 1 & 1 \\
        0 & 1 \end{pmatrix} + pA \]
for some $A \in \Mat_2(\Z/p^e\Z)$. 
 However, it is easy to see that $B$ does not commute with
every element of $H_{e-1}$; in particular, it does not commute with 
$\begin{pmatrix} 1 & 0 \\ p^{e-1} & 1 \end{pmatrix}$,
a contradiction. 

		Now suppose that $p=2$ and $e = 2$. Any $2$-Sylow subgroup of $\mathrm{SL}_2(\Z/4\Z)$ is an extension of $\Z/2\Z$ by $H_1\cong (\Z/2\Z)^3$. Therefore $3\leqslant \on{rank}_2(\mathrm{SL}_2(\Z/4\Z))\leqslant 4$. Since $\mathrm{SL}_2(\Z/4\Z)$ contains elements of order $4$, its $2$-Sylow subgroups are not elementary abelian, and hence  $\on{rank}_2(\mathrm{SL}_2(\Z/4\Z))=3$. 

Finally suppose that $p = 2$ and $e\geqslant 3$. In this case we will prove the following stronger assertion: The elements of order dividing $2$ in $\mathrm{SL}_2(\Z/2^e\Z)$ form a subgroup isomorphic to $(\Z/2\Z)^4$. For this, let $M \in \mathrm{SL}_2(\Z/2^e\Z)$ be such that $M^2=I$. By the Cayley--Hamilton theorem,
\[ M^2 - \on{tr}(M) M + \det(M) I = 0 \quad \text{in $\Mat_2(\Z/ 2^e \Z)$}. \]
Multiplying both sides by $M$, and remembering that $M^2 = I$ and $\det(M) = 1$, we obtain $2M = \on{tr}(M) I$ in $\Mat_2(\Z/ 2^e \Z)$. Thus $\on{tr}(M) = 2u$ in $\Z/ 2^e \Z$ for some odd integer $u$, and  
\[ \pi_{e-1}(M) =  u I \; \; \text{in} \; \, \Mat_2(\Z/2^{e-1} \Z) . \]
In other words, 
\[			M = \begin{pmatrix}
				u+2^{e-1}n_{11} & 2^{e-1}n_{12} \\
				2^{e-1}n_{21} & u+2^{e-1}n_{22}
			\end{pmatrix},      \]
for some integers $n_{11}, n_{12}, n_{21}, n_{22} \in\{0,1\}$. Moreover, since $\on{tr}(M) = 2u$ in $\Z/2^e \Z$, we conclude that $n_{11} = n_{22}$.
The condition that $M$ lies in $\mathrm{SL}_2(\Z/ 2^e \Z)$ translates 
to $u^2 \equiv 1 \pmod{2^e}$. Thus $u \equiv \pm 1\pmod{2^e}$ or $u\equiv \pm 1 + 2^{e-1}
\pmod{2^e}$. After possibly changing the value of $n_{11} = n_{22}$, we may assume that $u \equiv \pm 1 \pmod{2^e}$. In summary, every order $2$ element in $\mathrm{SL}_2(\Z/2^e\Z)$ is 
of the form
\begin{equation}\label{matrix-order-2}
			M = \begin{pmatrix}
				u+2^{e-1}n_{11} & 2^{e-1}n_{12} \\
				2^{e-1}n_{21} & u+2^{e-1}n_{11}
			\end{pmatrix},
		\end{equation}
where $n_{11}, n_{12}, n_{21} \in \{0,1\}$ and $u \equiv \pm 1 \pmod{2^e}$. 
Conversely, a direct computation shows that the $16$ matrices $M \in \mathrm{SL}_2(\Z/2^e\Z)$ of this form
have order $1$ or $2$ and commute with each other. Moreover, if we denote the set of all such matrices by $S$, then $S = \pi_{e-1}^{-1}(\{\pm I\})$ is a subgroup of $\mathrm{SL}_2(\Z/2^e\Z)$ of order $16$.
We conclude that $S \cong (\Z/2\Z)^4$. This completes the proof of \Cref{cor-p-rank}.
\end{proof}

\section{Splitting fields of \texorpdfstring{$\alpha_r$}{alpha r}}

\begin{prop}\label{lem.amitsur}
    	Let $p$ be a prime, let $r\geq 1$ be an integer, let $F$ be a field of characteristic exponent $\ell\neq p$, let $\zeta\in F^\times$ be a primitive $p$th root of unity, and suppose that $\Gamma_F$ fits into a short exact sequence
        \begin{equation}\label{eq:amitsur-gamma-f}
        \begin{tikzcd}
        1 \arrow[r] & \Delta \arrow[r] & \Gamma_F \arrow[r] & (\widehat{\Z}^{(\ell')})^{2r} \arrow[r] & 1, 
        \end{tikzcd}
        \end{equation}
        where $\widehat{\Z}^{(\ell')}$ is the prime-to-$\ell$ profinite completion of $\Z$, and where $\Delta$ is a pro-$\ell$-group. Fix scalars $t_1,\dots,t_{2r}\in F^{\times}$ whose images modulo $F^{\times p}$ form a basis of $H^1(F, \mu_p) = F^{\times}/F^{\times p}$ as an $\mathbb{F}_p$-vector space, and let $\alpha_r\coloneqq (t_1,t_2)_p+\dots+(t_{2r-1},t_{2r})_p\in \on{Br}(F)[p]$. Let $F'/F$ be a finite extension of prime-to-$p$ degree, let $L/F'$ be a finite Galois extension, and suppose that $\alpha_r$ splits over $L$. Then \[\on{rank}_p(\on{Gal}(L/F'))\geqslant r.\]
\end{prop}

\begin{rmk}\label{rmk:applies-to-puiseux}
    In the proof of Theorem~\ref{mainthm} in the next section we will be primarily interested in the setting where $k$ is an algebraically closed field of characteristic different from $p$, $t_1, \ldots, t_{2r}$ are algebraically independent variables over $k$, and $F = F_{r} = k(\!(t_1)\!)\cdots(\!(t_{2r})\!)$
    is the field of Laurent power series in $t_1, \ldots, t_{2r}$.
    By \cite[Lemma 5.1]{gille-reichstein}, \Cref{lem.amitsur} applies in this setting;
    cf.~also \cite[Lemma 7.6]{serre2003cohomological}. In this case, 
    \Cref{lem.amitsur} can be readily deduced from~\cite[Corollary 9.5]{tignol2015value}, whose proof uses valuation theory. For the sake of completeness, we include a self-contained cohomological proof of \Cref{lem.amitsur} which was suggested to us by the referee.
\end{rmk}

\begin{proof}[Proof of \Cref{lem.amitsur}]
Throughout the proof, we identify $\mu_p$ with $\F_p$ by means of the primitive
$p$th root of unity $\zeta$. For every finite extension $K/F$, the absolute Galois group of $K$ is also of the form \eqref{eq:amitsur-gamma-f}, with the same $r$. In particular, $H^1(K,\mu_p)$ also has rank $2r$; see~\eqref{h1}. Since $p$ does not divide $[F':F]$, the map $H^1(F,\mu_p)\to H^1(F',\mu_p)$ is an injective linear map of $\F_p$-vector spaces of the same dimension. Hence, this map is bijective; moreover, $t_1,\dots,t_{2r}$ form a basis of $(F')^\times/(F')^{\times p}$. We may thus replace $F$ by $F'$ to reduce to the case when $F'=F$. 

By \eqref{eq:amitsur-gamma-f}, every finite extension of $F$ of prime-to-$\ell$ degree is abelian Galois over $F$, and every finite Galois group over $F$ has normal $\ell$-Sylow subgroup. In particular, letting $P$ be the normal $\ell$-Sylow subgroup of $\on{Gal}(L/F)$ (take $P=\{1\}$ if $\ell=1$), the extension $L^P/F$ is Galois of prime-to-$\ell$ degree, $\on{Gal}(L/F)$ and $\on{Gal}(L^P/F)$ have isomorphic $p$-Sylow subgroups, and the map $\on{Br}(L^P)[p]\to \on{Br}(L)[p]$ is injective, so that $\alpha_r$ splits over $L^P$. We may thus replace $L$ by $L^P$ to reduce to the case when $\on{Gal}(L/F)$ is abelian and of prime-to-$\ell$ order.

Since $\Delta$ is a pro-$\ell$ group and $\ell\neq p$, by the Hochschild--Serre spectral sequence the inflation map $\on{Inf}\colon H^i((\widehat{\Z}^{(\ell')})^{2r},\mu_p)\to H^i(K,\mu_p)$ is an isomorphism for every $i\geq 0$. Combining this with the K\"unneth formula and the fact that $H^*(\widehat{\mathbb Z}^{(\ell')},\F_p)$ is an exterior algebra on a single generator of degree $1$, we deduce that the map $\bigwedge\nolimits^2H^1(F,\mu_p)\to H^2(F,\mu_p)$ given by $a\wedge b\mapsto a\cup b$ is an isomorphism. Since $\Gamma_L$ is also of the form \eqref{eq:amitsur-gamma-f}, we obtain a commutative square
\begin{equation}\label{eq:wedge}
\begin{tikzcd}
    \bigwedge\nolimits^2 H^1(F,\mu_p)\arrow[r,"\sim"]\arrow[d]  & H^2(F,\mu_p) \arrow[d]  \\
    \bigwedge\nolimits^2 H^1(L,\mu_p)\arrow[r,"\sim"] & H^2(L,\mu_p)
\end{tikzcd}
\end{equation}
where the horizontal maps are given by $a\wedge b \mapsto a\cup b$, and where the vertical maps are induced by the inclusion $F\hookrightarrow L$.

Let $e_1,\ldots,e_{2r}$ be the basis of $H^1(F,\mu_p)$ corresponding, under the Kummer isomorphism, to the classes of $t_1,\ldots,t_{2r}$, and consider the symplectic form $\omega\coloneqq e_1\wedge e_2+\cdots+e_{2r-1}\wedge e_{2r}$ on the dual vector space $H^1(F,\mu_p)^\vee$. Let $\varphi\colon H^1(F,\mu_p)\to H^1(L,\mu_p)$ be the restriction homomorphism, and consider its dual $\varphi^\vee\colon H^1(L,\mu_p)^\vee\to H^1(F,\mu_p)^\vee$. Since $(\alpha_r)_{L}=0$ in $H^2(L,\mu_p)$, the pullback $(\varphi^\vee)^*(\omega)$ to $H^1(L,\mu_p)^\vee$ is zero, and so the image of $\varphi^\vee$ is an isotropic subspace of the $2r$-dimensional symplectic vector space $H^1(F,\mu_p)^\vee$. This implies that $\dim_{\mathbb F_p}\on{Im}(\varphi^\vee)\leqslant r$ and hence that $\dim_{\mathbb F_p}(\on{Ker}(\varphi))\geqslant 2r-r=r$. By the inflation-restriction sequence, \[\on{Ker}(\varphi)\cong H^1(\on{Gal}(L/F),\mu_p)\cong \on{Hom}(\on{Gal}(L/F),\F_p).\]
Hence $\dim_{\mathbb F_p}(\on{Hom}(\on{Gal}(L/F),\F_p))\geqslant r$. Since $\on{Gal}(L/F)$ is a finite abelian group, this implies that $\on{Gal}(L/F)$ contains a
subgroup isomorphic to $(\mathbb Z/p\mathbb Z)^r$, as desired.
\end{proof}

	\section{Proof of Theorem \ref{mainthm}}\label{section-5}

	\begin{lemma}\label{prime-to-p-torsor}
		Let $F$ be a field, let $p$ be a prime invertible in $F$, let $A$ be an abelian variety over $F$, and let $T$ be an $A$-torsor over $F$. Then there exists a finite extension $F'/F$ of prime-to-$p$ degree such that the order of $T_{F'}$ in $H^1(F',A)$ is a power of $p$.
	\end{lemma}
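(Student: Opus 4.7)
The plan is to produce a finite Galois extension $L/F$ that trivializes $T$, and then pass to the fixed field of a Sylow $p$-subgroup of $\on{Gal}(L/F)$. Since $A$ is smooth over $F$, the $A$-torsor $T$ is trivialized by an \'etale cover of $\on{Spec}(F)$, hence by some finite separable extension $L_0/F$; replacing $L_0$ by its Galois closure inside a separable closure $F_s$ of $F$ yields a finite Galois extension $L/F$ with $T(L)\neq \emptyset$.

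Next, I would let $P\leqslant \on{Gal}(L/F)$ be a Sylow $p$-subgroup and set $F'\coloneqq L^P$. Then $[F':F]=[\on{Gal}(L/F):P]$ is coprime to $p$, while $[L:F']=|P|$ is a power of $p$; this $F'$ will be the prime-to-$p$ extension required by the lemma.

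To finish, I would invoke the standard restriction-corestriction identity $\on{cores}_{L/F'}\circ \on{res}_{L/F'}=[L:F']$ on $H^1(F',A)$. Since $T_L$ is trivial, $\on{res}_{L/F'}([T_{F'}])=[T_L]=0$, so applying $\on{cores}_{L/F'}$ yields $[L:F']\cdot [T_{F'}]=0$ in $H^1(F',A)$. Hence the order of $[T_{F'}]$ divides the $p$-power $[L:F']$, which is exactly what the lemma demands. There is no serious obstacle in this argument; the only step requiring a touch of care is the initial production of a Galois splitting extension for $T$, after which everything is routine Galois cohomology. The argument does not use anything about $A$ beyond the facts that it is a smooth $F$-group and that $H^1(F,A)$ is torsion.
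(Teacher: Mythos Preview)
Your proof is correct and slightly more direct than the paper's. The paper instead passes to a $p$-closure $F^{(p)}$ of $F$ (an algebraic extension in which every finite subextension of $F^{(p)}/F$ has prime-to-$p$ degree and every finite extension of $F^{(p)}$ has $p$-power degree), observes via restriction--corestriction that $[T_{F^{(p)}}]$ has $p$-power order $p^e$, and then descends the vanishing of $p^e\cdot [T]$ from $F^{(p)}$ to some finite subextension $F'\subset F^{(p)}$. Both arguments hinge on the same restriction--corestriction identity; the difference is organizational. Your route---pick one finite Galois splitting field and take the fixed field of a Sylow $p$-subgroup---is more elementary, as it avoids the $p$-closure construction and stays entirely within finite extensions. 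The paper's route is marginally more conceptual (it isolates the $p$-primary part ``at infinity'' before descending), but neither approach offers a real advantage over the other for this particular lemma. Incidentally, your argument does not even need the torsion hypothesis on $H^1(F,A)$ that you mention at the end: you directly prove that $[T_{F'}]$ is killed by the $p$-power $[L:F']$.
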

	
	\begin{proof}
		Let $\tau$ be the class of $T$ in $H^1(F,A)$. Let $F^{(p)}$ be a $p$-closure of $F$, that is, an algebraic extension of $F$ such that the degree of every finite extension of $F^{(p)}$ is a power of $p$, and such that the degree of every finite extension of $F$ contained in $F^{(p)}$ is not divisible by $p$; see \cite[Proposition 101.16]{ekm}. By a restriction-corestriction argument, $\tau_{F^{(p)}}$ has order $p^e$ in $H^1(F^{(p)},A)$, for some $e\geqslant 0$. Therefore, letting $T'$ be an $A$-torsor over $F$ with class $p^e\cdot \tau$ in $H^1(F,A)$, we have $T'(F^{(p)})\neq \emptyset$ and hence $T'(F')\neq\emptyset$ for some finite subextension $F\subset F'\subset F^{(p)}$. Thus the degree $[F':F]$ is not divisible by $p$ and $p^e\cdot\tau_{F'}=0$ in $H^1(F',A)$, as desired.
	\end{proof}

With Lemma~\ref{prime-to-p-torsor} at hand, we are now ready to finish the proof of \Cref{mainthm}.

		We may assume that $k$ is algebraically closed. Suppose, by contradiction, that there exist an abelian variety $A$ of dimension $g$ over $F_r$ and an $A$-torsor $T$ such that $\alpha_r$ becomes trivial in $\on{Br}(F_r(T))$, or equivalently in $\on{Br}(T)$. By \Cref{prime-to-p-torsor}, there exists a finite extension $F'/F_r$ of prime-to-$p$ degree such that the order of $T_{F'}$ in $H^1(F',A)$ is equal to $p^e$ for some $e\geqslant 0$. By \Cref{split-torsor-abelian-variety}, there exists a Galois field extension $L/F'$ such that $T(L)\neq\emptyset$ and such that $\on{Gal}(L/F')$ is an extension of a subgroup of $\mathrm{SL}_{2g}(\Z/p^e\Z)$ by a subgroup of $(\Z/p^e\Z)^{2g}$. By the subadditivity of the $p$-rank in short exact sequences
		\[\on{rank}_p(\on{Gal}(L/F')) \leqslant \on{rank}_p(\mathrm{SL}_{2g}(\Z/p^e\Z))+\on{rank}_p((\Z/p^e\Z)^{2g}) = \on{rank}_p(\mathrm{SL}_{2g}(\Z/p^e\Z)) + 2g ,\]
		and hence by \Cref{cor-p-rank} we have
		\[
		\on{rank}_p(\on{Gal}(L/F')) \leqslant
		\begin{cases}
			5g^2 +2g-1 & (p>2,\, g\geqslant 2), \\
			9g^2 + 2g-2 & (p = 2,\, g\geqslant 2), \\
			5 & (p>2,\, g=1), \\
			6 & (p=2,\, g=1).
		\end{cases}
		\]
		Combining this with the inequality on $r$ assumed in the statement of \Cref{mainthm}, we conclude that $\on{rank}_p(\on{Gal}(L/F'))<r$. On the other hand, since $T(L)\neq\emptyset$, the pullback map $\on{Br}(L)\to \on{Br}(T_L)$ is injective, and hence $\alpha_r$ splits over $L$. By~\Cref{lem.amitsur} and \Cref{rmk:applies-to-puiseux}, this implies $\on{rank}_p(\on{Gal}(L/F'))\geqslant r$, a contradiction. \qed

\section*{Acknowledgments}
We are grateful to Eoin Mackall, David Saltman, Burt Totaro and the anonymous referee for helpful comments.

\end{document}